\numberwithin{equation}{section}
\def\qed{{\hbadness=10000\hfill\ \vbox{\hrule height.09ex
   \hbox{\vrule width.09ex height1.55ex depth.2ex \kern1.8ex
   \vrule width.09ex height1.55ex depth.2ex}\hrule height.09ex}\break
   \bigskip}}
\newtheorem{theorem}{Theorem}[section]
\newtheorem{lemma}{Lemma}[section]
\theoremstyle{definition}
\theoremstyle{remark}
\begin{document}

\linespread{1}\title{\textbf{\textsl{h}-exponential change of Finsler metric}}

\author{M.$\,$K. \textsc{Gupta}\thanks{Supported by UGC, Government of India}\, and Anil K.\,\textsc{Gupta}\\
\normalsize{Department of Pure $\&$ Applied Mathematics}\\
\normalsize{Guru Ghasidas Vishwavidyalaya}\\
\normalsize{Bilaspur (C.G.), India}\\
\normalsize{Email: mkgiaps@gmail.com; gupta.anil409@gmail.com}}
\date{}
\maketitle

\linespread{1.3}\begin{abstract} In this paper, we studied a Finsler space whose metric is given by an \textsl{h}-exponential change and obtain the Cartan connection coefficients for the change. We also find the necesssary and sufficient condition for an \textsl{h}-exponential change of Finsler metric to be projective.\\
\noindent\textbf{Keywords:} Finsler space, \textsl{h}-exponential change, projective change.\\
2000 Mathematics Subject Classification:\textbf{ 53B40}.
\end{abstract}

\section{Introduction}
Let $F^n=(M^n,L)$ be an $n$-dimensional Finsler space equipped with the Fundamental function $L(x,y)$. The metric tensor, angular metric tensor and Cartan tensor are defined by $g^{}_{ij}=\frac{1}{2}\dot\partial_i\dot\partial_jL^2$, $h_{ij}=g^{}_{ij}-l_il_j$ \,and\, $C_{ijk}=\frac{1}{2}\dot\partial_ig_{jk}$ respectively, where $\dot\partial_k=\frac{\partial}{\partial y^k}$\,.\,The Cartan connection is given by $C\Gamma = (F^i_{jk},N^i_k,C^i_{jk})$. The \textsl{h}- and \textsl{v}-covariant derivatives $X_{i|j}$ and $X_i|_j$ of a covarient vector field $X_i$ are defined by \cite{mm86,hr59}
\begin{equation} X_{i|j} =\partial_j X_i -N^r_j\,\dot\partial_r X_i-X_r F^r_{ij}\,,\end{equation}\\[-12mm]
and \\[-12mm]
\begin{equation} X_i|_j= \dot\partial_j X_i -X_rC^r_{ij}\,,\end{equation}
where $\partial_k=\frac{\partial}{\partial x^k}$\,. 

\noindent In 2012, H.\,S.\,Shukla et.al.\cite{hb12} considered a Finsler space $ \overline F^n=(M^n,\overline L)$, whose Fundamental metric function is an exponential change of Finsler metric function given by  
\begin{equation}
\overline L=L\,e^\frac{\beta}{L}\,,
\end{equation}
where $\beta=b_i(x)y^i$ is $1$-form on manifold $M^n$.

H. Izumi \cite{hi80} introduced the concept of an \textsl{h}-vector $b_i(x,y)$ which is \textsl{v}-covarient constant with respect to the Cartan connection and satisfies $L\,C^h_{ij}\,b_h=\rho\,h_{ij}$, where $\rho$ is  a non-zero scalar function and $C^i_{jk}$ are components of Cartan tensor.\,Thus if $b_i$ is an \textsl{h}-vector then
\begin{equation}
(i)\, b_i|_k=0,\quad \quad\quad (ii)\,L\,C^h_{ij}b_h=\rho h_{ij}\,.
\end{equation}
From the above definition, we have 
\begin{equation} L\,\dot\partial_j b_i= \rho h_{ij}\,,\end{equation}
which shows that $b_i$ is a function of directional argument also. H.\,Izumi \cite{hi80} proved that  the scalar $\rho$ is independent of directional argument. Gupta and Pandey \cite{mp14} proved that if the \textsl{h}-vector $b_i$ is gradient then the scalar $\rho$ is constant. M.$\,$Matsumoto \cite{ma74} discussed the Cartan connection of Randers change of Finsler metric\,, while B.$\,$N.$\,$Prasad \cite{bn90} obtained the Cartan connection of $(M^n,^*\!\!L)$ where $^*\!L(x,y)$ is given by $^*\!L(x,y)=L(x,y)+b_i(x,y)y^i$, and $b_i(x, y)$ is an \textsl{h}-vector. Gupta and Pandey \cite{mp08,mp09} discussed the hypersurface of a Finsler space whose metric is given by certain transformation with an \textsl{h}-vector. \\$~~~~$ In the present paper, we consider a Finsler space $^*\!F^n=(M^n,{^*}\!L)$, whose metric function  $^*\!L$\,, an \textsl{h}-exponential change of metric\,, is given by
\begin{equation}
^*\!L=L\,e^\frac{\beta}{L}\,,
\end{equation}
where $\beta=b_i(x,y)y^i $ and $b_i$ is an \textsl{h}-vector. And we obtain the relation between Cartan connection coefficients of $F^n$ and $^*\!F^n$. We also derive the condition for an \textsl{h}-exponential change of metric to be projective.

\section { Finsler space $^*\!F^n=(M^n,^*\!\!L)$} 
We shall use following notations  $L_i=\dot\partial_i L=l_i$\,,\,\,\,\,$L_{ij}=\dot\partial_i\dot\partial_j L$\,,\,\,\, $L_{ijk}=\dot\partial_i\dot\partial_j\dot\partial_k L$. The quantities corrosponding to $^*\!F^n$ is denoted by asterisk over that quantity.\\
From (1.6), we have
\begin{equation}^*L_i=e^\tau\big(m_i+l_i\big).\end{equation}
\begin{equation}^*L_{ij}=e^\tau (1+\rho-\tau) L_{ij}+\frac{e^\tau}{L}m_im_j\,.\end{equation}
\begin{equation}
\begin{split}
^*L_{ijk}=&\,e^\tau\big(1+\rho-\tau\big)L_{ijk}+\big(\rho-\tau\big)\frac{e^\tau}{L} \big[m_iL_{jk}+ m_jL_{ik}+ m_kL_{ij}\big]\\& -\frac{e^\tau}{L^2}\big[m_jm_kl_i+ m_im_kl_j+m_im_jl_k-m_im_jm_k\big],
\end{split}
\end{equation}
where $\tau=\frac{\beta}{L}$, $m_i=b_i-\tau l_i$\,. The normalised suporting element, the metric tensor and Cartan tensor of $^*\!F$ are obtained as
\begin{equation}^*l_i=e^\tau\big(m_i+l_i\big),\end{equation}
\begin{equation}
^*\!g_{ij}=\nu\,e^{2\tau}g_{ij} +e^{2\tau}\Big(2\tau^2-\tau-\rho\Big)l_il_j+e^{2\tau}\Big(1-2\tau\Big)\big(b_il_j+b_jl_i\big)+2e^{2\tau}b_ib_j ,
\end{equation}
\begin{equation}
^*\!C_{ijk}=\nu\,e^{2\tau}C_{ijk} +\frac{2}{L}e^{2\tau}m_im_jm_k+\frac{1}{2L}e^{2\tau}(2\nu-1)\big(m_ih_{kj}+m_jh_{ki}+m_kh_{ij}\big),
\end{equation}
where $\nu=1+\rho-\tau$.

For the computation of the inverse metric tensor, we use the following lemma\,\cite{mm72}\,:
\begin{lemma}
Let $(m_{ij})$ be a non-singular matrix and $l_{ij}=m_{ij}+n_in_j$.
The elements $l^{ij}$ of the inverse matrix and determinant of the matrix $(l_{ij})$ are given by
\begin{equation*}l^{ij}=m^{ij}-\big(1+n_kn^k\big)^{-1}n^in^j,\,\,\,\,det\big(l_{ij}\big)=\big(1+n_kn^k\big)det\big(m_{ij}\big) \end{equation*}
respectively, where $m^{ij}$ are elements of inverse matrix $\big(m_{ij}\big)$ and $n^k=m^{ki}n_i$.
\end{lemma}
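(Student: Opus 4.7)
The plan is to verify both identities by direct computation, exploiting the rank-one nature of the perturbation $n_i n_j$.

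First I would check the inverse formula algebraically. Set $\sigma = 1 + n_k n^k$ and multiply the proposed $l^{ij}= m^{ij}-\sigma^{-1}n^i n^j$ against $l_{jk}=m_{jk}+n_j n_k$. Using $m^{ij}m_{jk}=\delta^i_k$ and the identity $n^j m_{jk}=m^{jr}n_r m_{jk}=n_k$ (which relies on the symmetry of $m_{ij}$, valid in the Finsler context), the product expands to
\begin{equation*}
l^{ij}l_{jk}=\delta^i_k+n^i n_k-\sigma^{-1}n^i n_k-\sigma^{-1}(n_j n^j)\,n^i n_k=\delta^i_k+n^i n_k\bigl(1-\sigma^{-1}\sigma\bigr)=\delta^i_k,
\end{equation*}
so the two cross terms telescope against the rank-one subtraction.

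For the determinant I would factor $l_{ij}=m_{ir}(\delta^r_j+n^r n_j)$, which immediately gives $\det(l_{ij})=\det(m_{ij})\cdot\det(\delta^r_j+n^r n_j)$. The second factor is computed by observing that the $(1,1)$-tensor $\delta^r_j+n^r n_j$ fixes every vector $v^j$ with $n_j v^j=0$ (contributing eigenvalue $1$ with multiplicity $n-1$) and sends $n^j$ to $(1+n_k n^k)n^j$ (contributing the remaining eigenvalue $1+n_k n^k$). Hence its determinant is $1+n_k n^k$, yielding $\det(l_{ij})=(1+n_k n^k)\det(m_{ij})$.

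Neither step presents a real obstacle; the only care required is bookkeeping of indices and making explicit use of the symmetry of $m_{ij}$ (so that $n^j = m^{jr}n_r$ really does contract correctly against $m_{jk}$). Writing $\sigma = 1+n_k n^k$ and working in abstract index notation throughout keeps the calculation transparent.
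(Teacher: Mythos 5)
Your verification is correct on both counts: the cross terms in $l^{ij}l_{jk}$ do telescope exactly as you write, and the factorization $l_{ij}=m_{ir}(\delta^r_j+n^r n_j)$ together with the rank-one determinant computation gives the stated determinant identity. Note that the paper itself offers no proof of this lemma --- it is quoted from Matsumoto \cite{mm72} and used as a black box to invert $^*\!g_{ij}$ --- so your direct check is a genuine addition rather than a parallel argument; you were also right to flag that $n^j m_{jk}=n_k$ needs the symmetry of $(m_{ij})$, which holds in the intended application. The only cosmetic caveat is that your eigenvalue argument for $\det(\delta^r_j+n^r n_j)$ tacitly assumes $n^j\neq 0$ and $1+n_kn^k\neq 1$ as a distinct eigenvalue; in degenerate cases one can fall back on the general identity $\det(I+uv^{T})=1+v^{T}u$, which covers all situations at once.
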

The inverse metric tencor of $^*F^n$ is derived as follows\,:
\begin{equation}\begin{split}
^*\!g^{ij}=\!\frac{e^{-2\tau}}{\nu}\Big [g^{ij}-\!\frac{1}{m^2+\nu}b^ib^j+\frac{\tau-\nu}{m^2+\nu}\Big(b^il^j+b^jl^i\Big)-\!l^il^j\Big \{\frac{\tau-\nu}{m^2+\!\nu}(m^2+\tau)-\!\rho \Big\} \Big],
\end{split}\end{equation}
where $b$ is magnitude of the vector $b^i=g^{ij}b_j$.\\
From (2.6 ) and (2.7), we obtain
\begin{equation}\begin{split}
^*\!C^h_{ij}= &C^h_{ij}+\frac{1}{m^2+\nu}C_{ijk}b^k (-b^h+2\tau\,l^h-\rho l^h-l^h)\\&+\frac{2}{\nu\,L}\Big[m_im_jm^h+\frac{1}{m^2+\nu}m_im_jm^2(-b^h+2\tau\,l^h-\rho l^h-l^h) \Big]\\&+\frac{1}{2\nu\,L}(2\nu-1)\Big[m_ih^h_j+m_jh^h_i+m^hh_{ij}\\&+\frac{1}{m^2+\nu}(-b^h+2\,\tau\,l^h-\rho l^h-l^h)\,(2m_im_j+m^2h_{ij})\Big].
\end{split}\end{equation}

\section {Cartan connection of the space $^*\!F^n$}
Let $C^*\Gamma=(^*\!F^i_{jk},^*\!\!N^i_j,^*\!C^i_{jk})$ be the Cartan connection for the Finsler space $^*\!F^n=(M^n,^*\!\!L)$.
Since $L_{i|j}=0 $ for the Cartan connection, we have
\begin{equation} \partial_j L_i=L_rF^r_{ij}+\dot\partial_rL_i\,N^r_j\,.\end{equation}
Differentiating (2.1) with respect to $x^j$,\,and using $(1.1)$ and $(3.1)$, we get
\begin{equation}\begin{split}
{^*\!L}_{ir}{^*N}^r_j+{^*}\!L_r{^*}\!F^r_{ij}
=&\Big[e^\tau\,\nu\,L_{ir}+\frac{e^\tau}{L}m_r m_i\Big] N^r_j +\Big[e^\tau\big(m_r+l_r\big)\Big]F^r_{ij}+\frac{e^\tau \beta_{j}\,m_i}{L}+e^\tau b_{i|j}\,.
\end{split}\end{equation}

\noindent Equation (3.2) serves the purpose to find relation between cartan connection of $^*\!F^n$ and $F^n$. For this, we put
\begin{equation} D^i_{jk}={^*}\!F^i_{jk}-F^i_{jk}\,.\end{equation}
With the help of (3.3), the equation (3.2) becomes
\begin{equation}
\Big[e^\tau\,\nu\,L_{ir}+\frac{e^\tau}{L}m_im_r\Big]D^r_{0j}+\Big[e^\tau\big(m_r+l_r\big)\Big]D^r_{ij} = \frac{e^\tau\beta_{|j}m_i}{L}+e^\tau b_{i|j}\,,
\end{equation}
where the subscrit `0' denote the contraction by $y^i$.\\
Differentiating $(2.2)$ with respect to $x^k$,\, and using (1.1) and (3.1), we have
\begin{equation}\begin{split}
& e^\tau\,\nu\Big [L_{ijr}D^r_{0k}+L_{rj}D^r_{ik}+L_{ir}D^r_{jk}\Big ]+\big(\nu-1\big)\frac{e^\tau}{L}\Big[m_rL_{ij}+m_iL_{jr}+m_jL_{ir}\Big]D^r_{0k}\\&-\frac{e^\tau}{L^2}\Big[m_im_jl_r+m_jm_rl_i+m_rm_il_r-m_im_jm_r\Big]D^r_{0k}+ \frac{e^\tau}{L}\Big[m_rm_jD^r_{ik}+m_im_rD^r_{jk}\Big]\\&-\frac{e^\tau\big(\nu-1\big)}{L}L_{ij}\,\beta_{|k}-\frac{e^\tau}{L^2}\beta_{|k}m_im_j-e^\tau\rho_kL_{ij}=0\,,
\end{split}\end{equation}
where $\rho^{}_k=\rho_{|k}=\partial_k\rho$\,.
\begin{theorem}
The Cartan connection of $^*\!F^n$ is completely determine by the equations (3.4) and (3.5) .
\end{theorem}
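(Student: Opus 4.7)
The plan is to show that the linear system (3.4)--(3.5), viewed as equations in the difference tensor $D^i_{jk}={^*\!F}^i_{jk}-F^i_{jk}$, has a unique solution. Once $D^i_{jk}$ is pinned down, the full coefficient $^*\!F^i_{jk}$ is recovered; then the Cartan-connection identity $y^i{}_{|j}=0$ gives $^*\!N^i_j={^*\!F}^i_{jk}y^k$, while $^*\!C^i_{jk}$ has already been computed in (2.7)--(2.8). Thus the entire triple $({^*\!F}^i_{jk},\,{^*\!N}^i_j,\,{^*\!C}^i_{jk})$ is determined, which is what the theorem asserts.

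To exhibit the inversion I would proceed in stages. First, I would contract (3.4) with $y^i$. The homogeneity relations $L_{ir}y^i=0$ and $m_iy^i=\beta-\tau L=0$ (valid since $m_i=b_i-\tau l_i$) annihilate the $D^r_{0j}$ bracket, and on the right $b_{i|j}y^i=\beta_{|j}$ follows from $y^i{}_{|j}=0$. This reduces (3.4) to the scalar relation $(m_r+l_r)D^r_{0j}=\beta_{|j}$, a first constraint on $D^r_{0j}$. Next, contracting (3.5) with $y^i$ and separately with $y^iy^j$, using $L_{ijr}y^i=-L_{rj}$ together with the same identities $m_iy^i=0$ and $l_iy^i=L$, yields---when combined with the first step---an explicit formula for $D^r_{0j}$ in terms of $\beta_{|j}$, $b_{i|j}$, $\rho_{|j}$, and the Cartan-connection data of $F^n$.

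Third, with $D^r_{0j}$ known, I would substitute it back into (3.4) and then perform a Christoffel-style cyclic combination on (3.5): permuting $(i,j,k)$ cyclically and forming the analogue of the classical sum that extracts $F^i_{jk}$ from $\delta_j g_{rk}$ causes the symmetric-in-$(j,k)$ terms to collapse and isolates a term of the shape $2\nu e^\tau L_{ir}D^r_{jk}$, modulo quantities already determined. Since $L_{ir}=h_{ir}/L$ has one-dimensional kernel spanned by $y^r$, and the $y^r$-component of $D^r_{jk}$ is already controlled by $D^r_{0k}$, the cyclic combination pins down $D^r_{jk}$ uniquely.

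The main obstacle I anticipate is precisely this last step: verifying that after the cyclic combination the residual operator acting on $D^r_{jk}$ is genuinely invertible modulo the $y^r$-direction. This reduces to confirming the non-degeneracy of $^*\!g_{ij}$, which by Lemma 1.1 is guaranteed as long as $\nu\neq 0$ and $m^2+\nu\neq 0$---the natural non-degeneracy assumptions built into considering an $h$-exponential change with an $h$-vector $b_i$. Granting this, an explicit formula for $D^i_{jk}$ in terms of $F^n$ data and the $h$-derivatives $b_{i|j}$, $\rho_{|j}$ drops out, and the theorem follows.
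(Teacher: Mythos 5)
Your overall architecture is the right one and is essentially the paper's: split each unknown into its component along $y^r$ (pinned down by contraction with ${}^*\!L_r$) and its transversal part (pinned down by ${}^*\!L_{ir}$, whose kernel is spanned by $y^r$), then work up from $D^r_{0j}$ to $D^r_{jk}$. Your third step (substitute the known $D^r_{0j}$ back into (3.4) for the longitudinal component of $D^r_{jk}$, and apply the Christoffel cyclic process to (3.5) for the transversal component) is exactly what the paper does in (3.25)--(3.29), and the non-degeneracy conditions $\nu\neq 0$, $m^2+\nu\neq 0$ you identify are precisely the denominators appearing in the paper's inversion lemma (Lemma 3.1).

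However, your second step has a genuine gap. Contracting (3.5) with $y^i$ (or with $y^iy^j$) yields the identity $0=0$: using $y^iL_{ijr}=-L_{jr}$, $y^iL_{ir}=0$, $y^im_i=0$ and $D^r_{ik}y^i=D^r_{0k}$, the leading bracket contributes $e^\tau\nu\big(-L_{jr}+L_{rj}\big)D^r_{0k}=0$, and every remaining term either vanishes or cancels in pairs (the $m_jm_rD^r_{0k}$ contributions from the third and fourth groups cancel), so no information about $D^r_{0j}$ is extracted. The useful contraction of (3.5) is with $y^k$, and even that only delivers the part of ${}^*\!L_{ir}D^r_{0j}$ that is \emph{symmetric} in $(i,j)$, together with terms in $D^r_{00}$ which therefore must be determined beforehand (a step you skip). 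Since ${}^*\!L_{ir}D^r_{0j}$ need not be symmetric in $(i,j)$, this symmetric data plus your single longitudinal relation ${}^*\!L_rD^r_{0j}=e^\tau\beta_{|j}$ underdetermine $D^r_{0j}$: the skew part of ${}^*\!L_{ir}D^r_{0j}$ is left completely free. The missing ingredient is the skew-symmetric part of (3.4) in $(i,j)$, which eliminates the $D^r_{ij}$ term (as $D^r_{ij}$ is symmetric) and supplies exactly that skew part; this is the paper's equation (3.11), which, contracted by $y^j$, first gives $D^r_{00}$ via (3.13)--(3.17) and then, added to the $y^k$-contraction (3.19) of the cycled (3.5), gives (3.20). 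Without this your linear system for $D^r_{0j}$ is not invertible and the argument does not close.
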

To prove this, first  we propose the following lemma :
\begin{lemma}
System of equations 
\begin{center}
$~~~~$$(i)\quad {^*\!L_{ir}\,A^r=B_i}$ \\
$~~~~$$(ii)\quad {^*\!L_r\,A^r=B}$\\
\end{center}
 has unique solution $A^r$ for given $ B $ and $B_i$.
\end{lemma}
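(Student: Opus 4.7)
The plan is to reduce the overdetermined-looking system of $n+1$ scalar equations in $n$ unknowns to a single non-singular linear system governed by the metric tensor ${}^*g_{ij}$. The starting identity is the standard Finslerian relation
\[
{}^*g_{ij}=\tfrac12\dot\partial_i\dot\partial_j({}^*L)^2={}^*L\cdot{}^*L_{ij}+{}^*L_i\,{}^*L_j,
\]
which is valid because the construction of ${}^*g_{ij}$ from ${}^*L$ mirrors that of $g_{ij}$ from $L$. Since ${}^*g_{ij}$ is the fundamental tensor of the Finsler space $^*F^n$ it is non-degenerate, so the inverse $({}^*g^{ij})$ exists and was in fact computed in (2.8).

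First I would combine (i) and (ii) by multiplying (i) by ${}^*L$ and adding ${}^*L_i$ times (ii):
\[
\big({}^*L\cdot{}^*L_{ir}+{}^*L_i\,{}^*L_r\big)A^{r}={}^*L\,B_i+{}^*L_i\,B,
\]
which by the identity above is the same as
\[
{}^*g_{ir}A^{r}={}^*L\,B_i+{}^*L_i\,B.
\]
Invertibility of ${}^*g_{ij}$ then gives the candidate
\[
A^{r}={}^*g^{rj}\bigl({}^*L\,B_j+{}^*L_j\,B\bigr),
\]
which is manifestly the unique vector that could satisfy both (i) and (ii) simultaneously.

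Next I would verify that this $A^{r}$ really does satisfy the two original equations separately. The key auxiliary identity is the homogeneity relation ${}^*g^{ij}\,{}^*L_j=y^{i}/{}^*L$, equivalently $y^{i}\,{}^*L_i={}^*L$, together with ${}^*L_{ir}\,y^{r}=0$ (because ${}^*L_i$ is positively homogeneous of degree zero in $y$). Using the first of these, a direct substitution shows ${}^*L_r A^{r}=B$ for (ii). For (i), I would write ${}^*L_{ir}=\bigl({}^*g_{ir}-{}^*L_i\,{}^*L_r\bigr)/{}^*L$, substitute the candidate $A^{r}$, and simplify; the two unwanted terms involving ${}^*L_i$ cancel, leaving exactly $B_i$, provided the natural compatibility $B_i y^{i}=0$ holds (which is forced by contracting (i) with $y^{i}$ and using ${}^*L_{ir}y^{r}=0$, and which is automatic for the $B_i$ appearing in (3.4) and (3.5)).

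The main obstacle is conceptual rather than computational: recognising that the apparently overdetermined system is in fact exactly rigid, because the one redundant relation among the $n$ equations in (i) (coming from $y$-homogeneity) is precisely compensated by the single extra equation (ii). Once the identity ${}^*g_{ij}={}^*L\,{}^*L_{ij}+{}^*L_i\,{}^*L_j$ is invoked, the uniqueness and existence both follow from the invertibility of ${}^*g_{ij}$, and the verification step is a short, almost mechanical check.
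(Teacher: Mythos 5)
Your proof is correct, but it takes a genuinely different route from the paper's. The paper substitutes the explicit expression (2.2) for ${}^*L_{ij}$ into (i), so that the system is rewritten entirely in terms of the unstarred quantities $g_{ir}$, $l_i$, $m_i$; it then contracts with $b^i$ to isolate $m_rA^r$, uses (2.1) together with (ii) to isolate $l_rA^r$, and finally reconstructs $g_{ir}A^r$ and inverts with the \emph{original} metric $g^{ij}$, arriving at the closed formula (3.9) for $A^j$ in terms of $B^j$, $B$ and $B_\beta$. You instead invoke the structural identity ${}^*g_{ij}={}^*L\,{}^*L_{ij}+{}^*L_i\,{}^*L_j$ and invert with the \emph{new} metric ${}^*g^{ij}$. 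Your argument is shorter, makes uniqueness an immediate consequence of the non-degeneracy of the fundamental tensor of ${}^*F^n$, and --- unlike the paper --- explicitly isolates the compatibility condition $B_iy^i=0$ that is genuinely required for existence (contracting (i) with $y^i$ annihilates the left-hand side, so the lemma's ``unique solution for given $B$ and $B_i$'' silently presupposes this; as you note, it does hold for the right-hand sides arising in Section 3). The trade-off is that the paper's computation delivers directly the explicit expression (3.9) that is actually consumed in (3.17), (3.24) and (3.29), whereas to extract the same thing from your formula $A^{r}={}^*g^{rj}\bigl({}^*L\,B_j+{}^*L_j\,B\bigr)$ you would still need to expand ${}^*g^{rj}$ via (2.8), which amounts to redoing the paper's contractions.
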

\begin{proof}
Using (2.2), equation (\textit{i}) becomes
\begin{equation}\begin{split}
\frac{e^\tau}{L}\Big[\nu\big({g_{ir}-l_il_r}\big)+m_im_r\Big]A^r=B_i \,.
\end{split}\end{equation}
Contracting by $b^i$, we get
\begin{equation}\begin{split}
m_rA^r=\frac{LB_\beta}{e^\tau}\big(m^2+\nu\big)^{-1}\,,
\end{split}\end{equation}
here we used subscript $\beta$ to denote the contraction by $b^i$,\,\,i.e. $B_{\beta}=B_ib^i$.\\
From (2.1) and \textit{(ii)}\,, we have
\begin{equation}
l_rA_r=\frac{B}{e^\tau}-\frac{LB_\beta}{e^\tau}\big(m^2+\nu\big)^{-1}\,.
\end{equation}
Using $(3.7)$ and $(3.8)$,\, equation$(3.6)$ becomes
\begin{equation*}\begin{split}
 g_{ir}A^r=\frac{LB_i}{\nu\,e^\tau}+ l_i\Big[\frac{B}{e^\tau}-\frac{LB_\beta}{e^\tau}\big(m^2+\nu\big)^{-1}\Big]-\frac{m_iLB_\beta}{\nu\,e^\tau}\big(m^2+\nu\big)^{-1}\,,\end{split}\end{equation*}
contracting by $g^{ij}$\,, we have  
\begin{equation}\begin{split}
 A^j=\frac{LB^j}{\nu\,e^\tau}+l^j\Big[\frac{B}{e^\tau}-\frac{LB_\beta}{e^\tau}\big(m^2+\nu\big)^{-1}\Big]-\frac{m^jLB_\beta}{\nu\,e^\tau}\big(m^2+\nu\big)^{-1}\,, 
 \end{split}\end{equation}
which is concrete form of the solution $A^j$.
\end{proof}

Now we are in the position to prove the theorem. We will find an explicit expression of difference tensor $D^i_{jk}$ in three steps. Firstly, we will find $D^i_{00}$ and then $D^i_{0k}$ and in the last $D^i_{jk}$.

Taking symmetric and skew-symmetric part of $(3.4)$, we have 
\begin{equation}
\begin{split}
2e^\tau\big(m_r+l_r\big)D^r_{ij}+\Big[\nu\,e^\tau L_{ir}+\frac{e^\tau}{L}m_im_r\Big]D^r_{0j}+&\Big[\nu\,e^\tau L_{jr}+\frac{e^\tau}{L}m_jm_r\Big]D^r_{0i} \\&=\frac{e^\tau}{L}\big(\beta_{|j}m_i+\beta_{|i}m_j\big)+2e^\tau E_{ij}\,,\end{split}
\end{equation}
and
\begin{equation}\begin{split}
\Big[\nu\,e^\tau L_{ir}+\frac{e^\tau}{L}m_im_r\Big]D^r_{0j}-&\,\Big[\nu\,e^\tau L_{jr}+\frac{e^\tau}{L}m_jm_r\Big]D^r_{0i} \\&=\frac{e^\tau}{L}\big(\beta_{|j}m_i-\beta_{|i}m_j\big)+2e^\tau F_{ij}, 
\end{split}\end{equation}
where $2E_{ij}=b_{j|i}+b_{i|j}, ~~~2F_{ij}=b_{i|j}-b_{j|i}.$\\
Contracting (3.10) and (3.11) by $y^j$, we get 
\begin{equation}
2e^\tau\big(m_r+l_r\big)D^r_{0i}+\Big[\nu\,e^\tau L_{ir}+\frac{e^\tau}{L}m_im_r\Big]D^r_{00} =\frac{e^\tau}{L}\beta_{|0}m_i+2e^\tau E_{i0} \,,
\end{equation}
and
\begin{equation}
\Big[\nu\,e^\tau L_{ir}+\frac{e^\tau}{L}m_im_r\Big]D^r_{00}=\frac{e^\tau}{L}\beta_{|0} m_i+2e^\tau F_{i0}\,,
\end{equation}
which may be re-written as
\begin{equation}
^*L_{ir}D^r_{00}=\frac{e^\tau}{L}\beta_{|0} m_i+2e^\tau F_{i0}\,,
\end{equation}
where $\beta_{|0}=\beta_{|j}y^j$. Transvecting  (3.13) by $ m^i$, we obtain
\begin{equation}
m_r D^r_{00}= \big(m^2+\nu\big)^{-1}\big(\beta_{|0} m^2+2LF_{\beta0}\big)\,.
\end{equation}
Contracting (3.12) by $y^i$,\,we get
\begin{equation*}
2e^\tau\big(m_r+l_r\big)D^r_{00}=2e^\tau E_{00}.
\end{equation*}\\[-13mm] 
\textit {i.e.}\\[-12mm]
\begin{equation}
^*L_rD^r_{00}=e^\tau E_{00}\,.
\end{equation}
Applying Lemma 3.1 in equation (3.14) and (3.16)\,, we have
\begin{equation}\begin{split}
D^i_{00}=&\frac{L}{\nu\,e^\tau}\Big[\frac{e^\tau}{L}\beta_{|0}m^i+2e^\tau F^i_0\Big]+l^i\Big[E_{00}-\frac{L}{e^\tau}\big(m^2+\nu\big)^{-1}\big(\frac{e^\tau}{L}\beta_{|0}m^2+2e^\tau F_{\beta 0}\big)\Big]\\
& -\frac{m^iL}{\nu\,e^\tau}\big(m^2+\nu\big)^{-1}\Big[\frac{e^\tau}{L}\beta_{|0}m^2+2e^\tau F_{\beta 0}\Big]\,.
\end{split}\end{equation}
Here we used $m^ib_i=m_im^i=m^2$. Also we note that $E_{00}=E_{ij}y^iy^j=b_{i|j}y^iy^j=(b_iy^i)_{|j}y^j=\beta_{|0}$,$~$$F^i_0=g^{ij}F_{j0}$.

Secondly, applying Christoffel process with respect to indices $i,j,k$ in equation (3.5), we have
\begin{equation}\begin{split}
& \nu\,e^\tau\Big[L_{ij r}D^r_{0k}+L_{jkr}D^r_{0i}-L_{kir}D^r_{0j}\Big] +2D^r_{ik}\Big[\nu\,e^\tau\,L_{jr}+\frac{e^\tau}{L}m_rm_j\Big]\\
&+\frac{e^\tau}{L}D^r_{0k}\mathfrak{S}_{(rij)}\Big[(\nu-\!1) m_rL_{ij}\!-\!\frac{m_im_jl_r}{L}\Big]
\!+\!\frac{e^\tau}{L}D^r_{0i}\mathfrak{S}_{(rjk)}\Big[(\nu-\!1) m_rL_{jk}-\!\frac{m_jm_kl_r}{L}\Big] \\
&-\frac{e^\tau}{L}D^r_{0j}\mathfrak{S}_{(rki)}\Big[(\nu-1) m_rL_{ki}-\frac{m_km_il_r}{L}\Big] -e^\tau  \Big[\rho_kL_{ij}+\rho_iL_{jk}-\rho_j L_{ki}\Big]\\
&-\big(\nu-1\big)\frac{e^\tau}{L}\big(\beta_{|k}L_{ij}+\beta_{|i}L_{jk}-\beta_{|j}L_{ki}\big)
-\frac{e^\tau}{L^2}\Big[ \beta_{|k}m_im_j+\beta_{|i}m_jm_k-\beta_{|j}m_km_i\Big] \\
&+\frac{e^\tau}{L^2}\Big[m_im_jm_rD^r_{0k}+m_jm_km_rD^r_{0i}-m_km_im_rD^r_{0j}\Big]=0\,,
\end{split}\end{equation}
where $\mathfrak{S}_{(ijk)}$ denote cyclic interchange of indices $i$,$j$,$k$ and summation. 
Contracting by $y^k$, above equation becomes 
\begin{equation}\begin{split}
&\nu\,e^\tau\Big[L_{ijr}D^r_{00}-L_{jr}D^r_{0i}+L_{ir}D^r_{0j}\Big]+2D^r_{0i}\,\Big[\nu\,e^\tau\,L_{jr}+\frac{e^\tau}{L}m_rm_j\Big] \\
& +\frac{e^\tau}{L}D^r_{00}\mathfrak{S}_{(rij)}\Big[(\nu-1) m_rL_{ij}-\frac{m_im_jl_r}{L}\Big]-\frac{e^\tau}{L}D^r_{0i}\frac{m_rm_jl_k}{L}y^k \\&
 +\frac{e^\tau}{L}D^r_{0j}\frac{m_rm_il_k}{L}y^k +\frac{e^\tau}{L^2}m_im_jm_rD^r_{00}-(\nu-1)\frac{e^\tau}{L}\beta_{|0}L_{ij}\\& 
-\frac{e^\tau}{L^2} \beta_{|0}m_im_j -e^\tau  \rho_0L_{ij}=0\,.
\end{split}
\end{equation}
Adding (3.11) and (3.19)\,, we have 
\begin{equation}
^*L_{ir}D^r_{0j}=G_{ij}\,,
\end{equation}\\[-13 mm]
where\\[-6 mm]
\begin{equation}\begin{split}
 2G_{ij}=&\frac{e^\tau}{L}\big(\beta_{|j}m_i-\beta_{|i}m_j\big)-\!e^\tau \nu\,L_{ijr}D^r_{00}-\!\frac{e^\tau}{L}D^r_{00}\mathfrak{S}_{(rij)}\Big[(\nu-\!1)m_rL_{ij}-\!\frac{m_im_jm_r}{L}\Big]\\
&+2e^\tau F_{ij}-\frac {e^\tau}{L^2}m_rm_im_jD^r_{00}+\frac{(\nu-\!1)}{L}e^\tau \beta_{|0}L_{ij}+\frac{e^\tau}{L^2}B_0m_im_j+e^\tau \rho_0 L_{ij}\,.
\end{split}\end{equation}
Equation (3.12) can be written as
\begin{equation}\begin{split}
^*L_rD^r_{0j} & = G_j\,,
\end{split}\end{equation}
where
\begin{equation*}
2G_j=\frac{e^\tau}{L}\beta_{|0}m_j+2e^\tau E_{j0}+\Big[-{e^\tau \nu\,L_{jr}}-\frac{e^\tau m_jm_r}{L}\Big]D^r_{00}\,.
\end{equation*}
Using (3.13), above equation may be written as
\begin{equation}
G_j=e^\tau\big(E_{j0}-F_{j0}\big).
\end{equation}
Applying Lemma 3.1\, in equation $(3.20)$ and $(3.22)$\,,  we obtain
\begin{equation}
D^i_{0j}=\frac{LG^i_j}{\nu\,e^\tau}+\frac{l^i}{e^\tau}\Big[G_j\,-LG_{\beta j}\big(m^2+\nu\big)^{-1}\Big]-\frac{m^iLG_{\beta j}}{\nu\,e^\tau}\big(m^2+\nu\big)^{-1}\,.
\end{equation}
\indent Finally,\,the equation (3.10) may be written as
\begin{equation}
{^*}L_rD^r_{ik}=H_{ik}\,,\\[-5mm]
\end{equation}
where 
\begin{equation}\begin{split}
2H_{ik}=\frac{e^\tau}{L}\big(\beta_{|k}m_i+\beta_{|i}m_k\big)&+e^\tau E_{ik}-\Big[e^\tau \nu\,L_{ir}+\frac{e^\tau}{L}m_im_r\Big]D^r_{0k}\\
&-\Big[\,e^\tau \nu\,L_{kr}+\frac{e^\tau}{L}m_km_r\Big]D^r_{0i}\,.
\end{split}\end{equation}
Equation (3.18) may be written as
\begin{equation}
^*L_{rj}D^r_{ik}=H_{jik}\,,\\[-5mm]
\end{equation}
where\\[-5mm]
\begin{equation}\begin{split}
2H_{jik}=& -\nu\,e^\tau\Big[L_{ij r}D^r_{0k}+L_{jkr}D^r_{0i}-L_{kir}D^r_{0j}\Big]+e^\tau  \Big[\rho_kL_{ij}+\rho_iL_{jk}-\rho_j L_{ki}\Big] \\
&-\frac{e^\tau}{L}D^r_{0k}\mathfrak{S}_{(rij)}\Big[(\nu-\!1) m_rL_{ij}-\frac{m_im_jl_r}{L}\Big]-\frac{e^\tau}{L}D^r_{0i}\mathfrak{S}_{(rjk)}\Big[(\nu-\!1) m_rL_{jk}-\frac{m_jm_kl_r}{L}\Big] \\
&+\frac{e^\tau}{L}D^r_{0j}\mathfrak{S}_{(rki)}\Big[(\nu-\!1) m_rL_{ki}-\frac{m_km_il_r}{L}\Big] \\
&+(\nu-\!1)\frac{e^\tau}{L}\big(\beta_{|k}L_{ij}+\beta_{|i}L_{jk}-\beta_{|j}L_{ki}\big)
+\frac{e^\tau}{L^2}\Big[ \beta_{|k}m_im_j+\beta_{|i}m_jm_k-\beta_{|j}m_km_i\Big] \\
&-\frac{e^\tau}{L^2}\Big[m_im_jm_rD^r_{0k}+m_jm_km_rD^r_{0i}-m_km_im_rD^r_{0j}\Big]\,.
\end{split}\end{equation}

\noindent Applying Lemma 3.1\, in $(3.25)$ and $(3.27)$, we have
\begin{equation}\begin{split}
D^j_{ik}=\frac{LH^j_{ik}}{\nu\,e^\tau}+& \frac{l^j}{e^\tau}\Big[{H_{ik}}-{LH_{\beta ik}\big(m^2+\nu\big)^{-1}}\Big]-\frac{m^jL}{\nu\,e^\tau} H_{\beta ik}\big(m^2+\nu\big)^{-1} ,
\end{split}\end{equation}
where we put $H^j_{ik}=g^{jm}H_{mik}$.\\
Thus in view of (3.3), we get the Cartan connection coefficient $^*\!F^i_{jk}$\,. This completes the proof of theorem (3.1).\\

\indent Now, suppose Cartan connection coefficients for both spaces $F^n$ and $^*\!F^n$ are same\,, \textit {i.e.} $^*\!F^i_{jk}$=$F^i_{jk}$. Then $D^i_{jk}=0$.\,But then equations (3.12) and (3.13) implies that $E_{i0}=F_{i0}$\,,\,\,and hence 
\begin{equation} b_{0|i}=0\,,\end{equation}
\textit {i.e.} $\beta_{|i}=0$. Differentiating $\beta_{|i}=0$ partially with respect to $y^j$ and applying  commutation formulae $\dot\partial_j(\beta_{|i})-(\dot\partial_j\beta)_{|i}=-(\dot\partial_r \beta)C^r_{ij|0}$\,, we get
\begin{equation}
b_{j|i}=b_rC^r_{ij|0}\,.
\end{equation}
From the above equation, we conclude that $F_{ij}=0$. M. K. Gupta and P. N. Pandey \cite{mp14} has proved that if \textsl{h}-vector $b_i$ is gradient, i.e. $F_{ij}=0$ then $\rho$ is constant, i.e. $\rho_i=\rho_{|i}=0$. Taking \textsl{h}-covariant derivative  of $LC^r_{ij}b_r=\rho h_{ij}$ and using \,$L_{|k}=0$,\,\,$\rho_{|k}=0$\, and $h_{ij|k}=0$,\, we have
\begin{equation*}
 (b_rC^r_{ij})_{k}=\frac{\rho}{L}h_{ij}=0\,,
\end{equation*}
\textit {i.e.}, 
\begin{equation*}
b_{r|k}C^r_{ij}+b_rC^r_{ij|k}=0\,.
\end{equation*}
From (3.31)\,, $b_{r|k}=b_{k|r}$ and hence above equation becomes 
\begin{equation*} b_{k|r}C^r_{ij}+b_rC^r_{ij|k}=0\,. \end{equation*}
Transvecting by $y^k$, we have $b_{0|r}C^r_{ij}+b_rC^r_{ij|0}=0$.\,Using (3.30) and (3.31), we conclude that $b_{i|j}=0 $.\\$~~~~$ Conversely, $b_{i|j}=0$ implies that $E_{ij}=0=F_{ij}$ and $\beta_{|i}=\beta_{|i}=b_{j|i}=0$.\,$F_{ij}=0$\, implies that $\rho_i=\rho_{|i}=0$\cite{mp14}. Therefore from $(3.17)$, we get $D^i_{00}=0$ and then $G_{ij}=0$ and $G_j=0$. This gives $D^i_{0j}=0$ and then $H_{jik}=0$ and $H_{ik}=0$. Therefore (3.29) implies that $D^i_{jk}=0$. Thus\,, we have\,:
\begin{theorem}
For an \textsl{h}-exponential change of metric\,, the Cartan connection coefficients for both spaces $F^n$ and $^*\!F^n$ are same if and only if the \textsl{h}-vector $b_i$ is parallel with respect to Cartan connection of $F^n$.
\end{theorem}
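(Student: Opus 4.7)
The plan is to leverage the explicit formulas for $D^i_{jk}=\,^*\!F^i_{jk}-F^i_{jk}$ obtained in Theorem 3.1 together with the auxiliary identities (3.17), (3.24), (3.29). The reverse implication amounts to a substitution, while the forward implication requires a bootstrap from the contracted vanishing $\beta_{|i}=0$ to the full pointwise vanishing $b_{i|j}=0$, by means of the Cartan commutation formula and the $h$-vector axiom (1.4)(ii).

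For the reverse direction, assume $b_{i|j}=0$. Then its symmetric and skew parts vanish, $E_{ij}=F_{ij}=0$, and hence also $\beta_{|k}=0$. Since $F_{ij}=0$ makes $b_i$ a gradient, the Gupta--Pandey result \cite{mp14} gives $\rho_{|i}=0$. Substituting these into (3.17) yields $D^i_{00}=0$; this forces $G_{ij}=0$ in (3.21) and $G_j=0$ in (3.23), so (3.24) gives $D^i_{0j}=0$; then $H_{ik}$ in (3.26) and $H_{jik}$ in (3.28) both vanish, so (3.29) forces $D^j_{ik}=0$, i.e.\ $\,^*\!F^i_{jk}=F^i_{jk}$.

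For the forward direction, assume $D^i_{jk}=0$. Specializing (3.12) and (3.13) with $D^r_{00}=D^r_{0i}=0$ gives $\frac{e^\tau}{L}\beta_{|0}m_i+2e^\tau E_{i0}=0$ and the analogous identity with $F_{i0}$, whence $E_{i0}=F_{i0}$; this reads $b_{0|i}=0$, equivalently $\beta_{|i}=0$. To upgrade this to $b_{i|j}=0$ I would differentiate $\beta_{|i}=0$ with respect to $y^j$ and apply the commutation formula $\dot\partial_j(X_{|i})-(\dot\partial_j X)_{|i}=-(\dot\partial_r X)C^r_{ij|0}$ with $X=\beta$, noting that $\dot\partial_r\beta=b_r$ since $h_{rj}y^r=0$. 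This produces $b_{j|i}=b_rC^r_{ij|0}$, whose right-hand side is symmetric in $i,j$ by the symmetry of the Cartan tensor, so $F_{ij}=0$. Gupta--Pandey \cite{mp14} then gives $\rho_{|i}=0$. Taking the $h$-covariant derivative of $LC^r_{ij}b_r=\rho h_{ij}$ (using $L_{|k}=0$, $\rho_{|k}=0$, $h_{ij|k}=0$) gives $b_{r|k}C^r_{ij}+b_rC^r_{ij|k}=0$; combining this with the symmetry $b_{r|k}=b_{k|r}$ just established and contracting with $y^k$ should close the loop to yield $b_{i|j}=0$.

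The main obstacle, as I see it, lies entirely in the forward direction. The vanishing of the connection difference $D^i_{jk}$ gives only the contracted information $\beta_{|i}=0$, and the real work is to promote this to the full tensor identity $b_{i|j}=0$. The two nontrivial ingredients are the commutation formula (to extract $F_{ij}=0$ from $\beta_{|i}=0$ via the symmetry of $C^r_{ij|0}$) and the defining relation of the $h$-vector (to convert $F_{ij}=0$ and $\rho_{|i}=0$ into full parallelism). Once these are in place the remaining manipulations are purely algebraic.
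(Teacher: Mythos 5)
Your proposal follows the paper's own argument essentially verbatim in both directions: the converse is the same substitution chain $D^i_{00}=0 \Rightarrow G_{ij}=G_j=0 \Rightarrow D^i_{0j}=0 \Rightarrow H_{ik}=H_{jik}=0 \Rightarrow D^i_{jk}=0$, and the forward direction uses the identical bootstrap ($E_{i0}=F_{i0}$ from (3.12)--(3.13), the commutation formula to get $b_{j|i}=b_rC^r_{ij|0}$ and hence $F_{ij}=0$, the Gupta--Pandey result for $\rho_{|i}=0$, and the $h$-covariant derivative of $LC^r_{ij}b_r=\rho h_{ij}$ contracted with $y^k$). The final step you flag as tentative does close as expected, since contraction with $y^k$ gives $b_{0|r}C^r_{ij}+b_rC^r_{ij|0}=0$ and $b_{0|r}=0$ then forces $b_{j|i}=b_rC^r_{ij|0}=0$.
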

Now transvecting (3.3) by $y^j$ and using $F^i_{jk}\,y^j=G^i_k$, we obtain
\begin{equation}
^*G^i_k=G^i_k+D^i_{0k}\,.
\end{equation}
Transvecting again the above equation by $y^k$ and using $G^i_k y^k=2G^i$, we get
\begin{equation}
2\,^*\!G^i=2G^i+D^i_{00}\,.
\end{equation}
Differentiating (3.32) partially with respect to $y^h$ and using $\dot\partial_h G^i_k=G^i_{kh}$\,, we have
\begin{equation}
^*G^i_{kh}=G^i_{kh}+\dot\partial_h D^i_{0k}\,,
\end{equation}
where $G^i_{kh}$ are Berwald connection coefficients.\\
$~~~~$ Now, if the \textsl{h}-vector $b_i$ is parallel with respect to Cartan connection of $F^n$ then by Theorem (3.2), the Cartan connection coefficients for both spaces $F^n$ and ${^*}\!F^n$ are same, therefore $D^i_{jk}=0$. Hence from (3.34), we get $^*G^i_{kh}=G^i_{kh}$.\\Thus, we have\,:
\begin{theorem}
For an \textsl{h}-exponential change of metric\,, if an \textsl{h}-vector $b_i$ is parallel with respect to Cartan connection of $F^n$ then Barwald connection coefficients for both spaces $F^n$ and $^*\!F^n$ are the same.
\end{theorem}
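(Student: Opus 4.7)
The plan is to deduce Theorem 3.3 as a short corollary of Theorem 3.2 together with the already derived identity (3.34), which expresses the Berwald connection coefficients of $^*\!F^n$ in terms of those of $F^n$ plus a correction term depending on the difference tensor $D^i_{jk}$.

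First, I would invoke Theorem 3.2: the assumption that the \textsl{h}-vector $b_i$ is parallel with respect to the Cartan connection of $F^n$, i.e.\ $b_{i|j}=0$, is precisely the necessary and sufficient condition there for $^*\!F^i_{jk}=F^i_{jk}$. Via the definition (3.3), this translates directly into the vanishing of the difference tensor, $D^i_{jk}=0$ for all indices.

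Next, I would contract $D^i_{jk}=0$ with $y^j$ to conclude $D^i_{0k}=0$. Since this identity holds for all $(x,y)$, differentiating with respect to $y^h$ gives $\dot\partial_h D^i_{0k}=0$. Substituting both facts into the previously established formula (3.34), namely $^*G^i_{kh}=G^i_{kh}+\dot\partial_h D^i_{0k}$, yields immediately $^*G^i_{kh}=G^i_{kh}$, which is the claimed equality of Berwald connection coefficients.

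There is no real obstacle here; Theorem 3.3 is essentially a bookkeeping consequence of Theorem 3.2 once the bridge (3.34) between the Berwald coefficients and the Cartan difference tensor is in hand. The only point that deserves a line of care is noting that $D^i_{jk}=0$ as a tensor identity in $(x,y)$ survives the $y^j$-contraction and subsequent $y^h$-differentiation, so that the correction term in (3.34) vanishes identically rather than merely at a point.
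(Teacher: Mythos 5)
Your proposal is correct and follows exactly the paper's own argument: invoke Theorem 3.2 to get $D^i_{jk}=0$, hence $D^i_{0k}=0$ identically, and then (3.34) gives $^*G^i_{kh}=G^i_{kh}$. The extra remark that the identity must hold for all $(x,y)$ before differentiating in $y^h$ is a sensible clarification but does not change the route.
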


\section {Condition for \textsl{h}-exponential change of metric to be projective}
Let us consider Finsler spaces $F^n=(M^n,L)$ and $^*\!F^n=(M^n, ^*\!L)$. A transformation from $L$ to $^*\!L$ is called projective change if any geodesis on $F^n=(M^n,L)$ is also geodesis on $^*\!F^n=(M^n, ^*\!\!L)$ and vice-versa. A geodesis on $F^n$ is given by
\begin{equation*}
\frac{dy^i}{dx} +2G^i(x,y)=\tau y^i\,; \quad \tau=\frac{d^2s/dt^2}{ds/dt}
\end{equation*}
The change $L\mapsto {^*}\!L$ is projective change if and only if there exits a scaler function $P(x,y)$ which is positive homogeneous of degree one in $y^i$ and satisfies
\begin{equation*}
^*G (x,y)=G^i(x,y)+ P(x,y) y^i \,.
\end{equation*}
Now, we find the condition for exponential change with \textsl{h}-vector to be projective. From (3.33), it follows that exponential change with \textsl{h}-vector to be projective if and only if $D^i_{00}=2Py^i$. Then from (3.17), we get
\begin{equation}\begin{split}
2Py^i=&\frac{L}{\nu\,e^\tau}\Big[\frac{e^\tau}{L}\beta_{|0}m^i+2e^\tau F^i_0\Big]+l^i\Big[E_{00}-\frac{L}{e^\tau }\big(m^2+\nu\big)^{-1}\big(\frac{e^\tau}{L}\beta_{|0}m^2+2e^\tau F_{\beta 0}\big)\Big]\\
& -\frac{m^iL}{\nu\,e^\tau}\big(m^2+\nu\big)^{-1}\Big[\frac{e^\tau}{L}\beta_{|0}m^2+2e^\tau F_{\beta 0}\Big]\,.
\end{split}\end{equation} 
Transvecting  (4.1) by $y_i$ and using $m^iy_i=0$, $F^i_0 y_i=0$, we get
\begin{equation}
P=\frac{y_il^i}{2L^2}\Big[E_{00}-\frac{L}{e^\tau }\big(m^2+\nu\big)^{-1}\big(\frac{e^\tau}{L}\beta_{|0}m^2+2e^\tau F_{\beta 0}\big)\Big].
\end{equation}
Substituting the value of $P$ in(4.1), we get
\begin{equation}
F^i_0=\frac{m^i}{2L}\big(m^2+\nu\big)^{-1}\big(\beta_{|0}m^2+2L F_{\beta 0}\big)-\frac{\beta_{|0} m^i}{2L}\,.
\end{equation}
Using (3.15) in above equation, we have
\begin{equation}
F^i_0=\frac{m^i}{2L} m_rD^r_{00}-\frac{\beta_{|0} m^i}{2L}\,.
\end{equation}
Transvecting by $g_{ij}$ to above equation, we have
\begin{equation}
F_{i0}=\frac{m_i}{2L} m_rD^r_{00}-\frac{\beta_{|0} m_i}{2L}\,.
\end{equation}
Using (4.5) in (3.13) and reffering $\nu\neq 0$, we obtain $L_{ir}D^r_{00}=0$, which transvecting by $m^i$ and using $L_{ir}m^i=\frac{1}{L}m_r$, we get $m_rD^r_{00}=0$ ,\,and then (4.5) becomes
\begin{equation}
F_{i0}=-\frac{\beta_{|0} m_i}{2L}\,.
\end{equation}
The equation (4.6) is necessary condition for \textsl{h}-exponential change  to be projective change.\\
Conversely, if (4.6) satisfied\,, the equation (3.13) yields
\begin{equation}
\Big[e^\tau \nu\,L_{ir}+\frac{e^\tau}{L}m_im_r\Big]D^r_{00}=0\,.
\end{equation}
Transvecting by $m^i$ and referring $(m^2+\nu) \neq 0,$ we get $m_rD^r_{00}=0$ and then (3.17) gives $D^i_{00}=E_{00}l^i$. Therefore $^*\!F^n$ is projective to $F^n$. Thus\,, we have\,: 
\begin{theorem}
The \textsl{h}-exponential change given by (1.6) is projective if and only if condition (4.6)  is satisfied.
\end{theorem}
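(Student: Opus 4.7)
The plan is to reduce projectivity to an algebraic condition on the difference tensor $D^i_{00}$ and then extract (4.6) from the explicit expression (3.17). From (3.33), the change is projective exactly when $D^i_{00}=2Py^i$ for some scalar $P$ positively homogeneous of degree one in $y$. Both implications will rest on (3.17) and the relation (3.15), and the crucial intermediate fact is that $m_r D^r_{00}=0$.

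For the forward implication, I would substitute $D^i_{00}=2Py^i$ into (3.17) and transvect with $y_i$. Noting that $m_i y^i=\beta-\tau L=0$ (so $m^i y_i=0$) and that the skew-symmetry of $F_{ij}$ gives $F^i_0 y_i=0$, the $m^i$- and $F^i_0$-contributions drop out and $P$ is pinned down in terms of $E_{00}$ and the combination $\beta_{|0}m^2+2LF_{\beta0}$. Re-substituting this value of $P$ produces an equation for $F^i_0$, and replacing $\beta_{|0}m^2+2LF_{\beta0}$ by $(m^2+\nu)\,m_r D^r_{00}$ via (3.15) yields the intermediate form
\begin{equation*}
F_{i0}=\tfrac{m_i}{2L}\,m_r D^r_{00}-\tfrac{\beta_{|0}\,m_i}{2L}.
\end{equation*}
Plugging this back into (3.13) and using $\nu\neq 0$ cancels the $m_i m_r$ and $\beta_{|0}m_i$ contributions and leaves $L_{ir}D^r_{00}=0$. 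Contracting with $m^i$ and invoking $L\,L_{ir}=h_{ir}$ together with $l^i m_i=0$ (hence $L_{ir}m^i=m_r/L$) gives $m_r D^r_{00}=0$, and (4.6) follows.

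For the converse, I would plug (4.6) directly into (3.13); the right-hand side collapses to zero, so $\bigl[\nu L_{ir}+\tfrac{1}{L}m_im_r\bigr]D^r_{00}=0$. Transvecting with $m^i$ and using $m^2+\nu\neq 0$ again forces $m_r D^r_{00}=0$, which via (3.15) forces $\beta_{|0}m^2+2LF_{\beta0}=0$; combined with $F^i_0=-\beta_{|0}m^i/(2L)$, this makes every term of (3.17) except the $E_{00}l^i$ piece vanish, giving $D^i_{00}=E_{00}\,l^i=(E_{00}/L)\,y^i$. Hence ${}^*G^i=G^i+Py^i$ with $P=E_{00}/(2L)$, which is homogeneous of degree one in $y$, so projectivity holds.

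The main obstacle is bookkeeping rather than conceptual: one must keep track of the orthogonality relations $m^i y_i=0$, $l^i m_i=0$, $F^i_0 y_i=0$, and verify that the fairly elaborate right-hand side of (3.17) collapses to a multiple of $y^i$ precisely when $m_r D^r_{00}=0$, which is in turn forced by (4.6) through (3.13). Once this collapse is established, both directions of the equivalence follow mechanically from the explicit formulas already derived in Section 3.
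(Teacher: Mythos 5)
Your proposal is correct and follows essentially the same route as the paper: substitute $D^i_{00}=2Py^i$ into (3.17), transvect by $y_i$ to solve for $P$, back-substitute and use (3.15) and (3.13) with $\nu\neq0$ to force $m_rD^r_{00}=0$ and hence (4.6); conversely, feed (4.6) into (3.13), transvect by $m^i$ with $m^2+\nu\neq0$, and collapse (3.17) to $D^i_{00}=E_{00}l^i$. The only additions are harmless extra detail (the justification of $L_{ir}m^i=m_r/L$ and the explicit homogeneity check for $P=E_{00}/(2L)$), which the paper leaves implicit.
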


\small

\end{document}